\newtheorem{theorem}{Theorem}
\theoremstyle{plain}
\newtheorem{corollary}{Corollary}
\newtheorem{definition}{Definition}
\newtheorem{proposition}{Proposition}
\numberwithin{equation}{section}
\begin{document}
\title[Magnetic Curves in $C$-manifolds]{Magnetic Curves in $C$-manifolds}
\author{\c{S}aban G\"{u}ven\c{c}}
\address[\c{S}. G\"{u}ven\c{c}]{Balikesir University, Department of
Mathematics\\
10145, \c{C}a\u{g}\i \c{s}, Bal\i kesir, TURKEY}
\email[\c{S}. G\"{u}ven\c{c}]{sguvenc@balikesir.edu.tr}
\subjclass[2010]{Primary 53C25; Secondary 53C40, 53A04}
\keywords{$C$-manifold, magnetic curve, $\theta _{\alpha }$-slant curve}

\begin{abstract}
In this paper, we study normal magnetic curves in $C$-manifolds. We prove
that magnetic trajectories with respect to the contact magnetic fields are
indeed $\theta _{\alpha }$-slant curves with certain curvature functions.
Then, we give the parametrizations of normal magnetic curves in $\mathbb{R}^{2n+s}$ with its structures as a $C$-manifold.
\end{abstract}

\maketitle

\section{Introduction}

Let $(M,g)$ be a Riemannian manifold, $F$ a closed $2$-form and let us
denote the Lorentz force on $M$ by $\Phi $, which is a $(1,1)$-type tensor
field.\ If $F$ is associated by the relation%
\begin{equation}
g(\Phi X,Y)=F(X,Y),\text{ \ }\forall X,Y\in \chi (M),  \label{F}
\end{equation}%
then it is called a\textit{\ magnetic field} (\cite{Adachi-1996}, \ \cite%
{BRCF} and \cite{Comtet-1987}). Let $\nabla $ be the Riemannian connection
associated to the Riemannian metric $g$ and $\gamma :I\rightarrow M$ a
smooth curve. If $\gamma $ satisfies the Lorentz equation%
\begin{equation}
\nabla _{\gamma ^{\prime }(t)}\gamma ^{\prime }(t)=\Phi (\gamma ^{\prime
}(t)),  \label{Lorentz eq}
\end{equation}%
then it is called a \textit{magnetic curve} or a \textit{trajectory} for the
magnetic field $F$. The Lorentz equation can be considered as a
generalization of the equation for geodesics. Magnetic trajectories have
constant speed. If the speed of the magnetic curve $\gamma $ is equal to $1$%
, then it is called a \textit{normal magnetic curve }\cite{DIMN-2015}. For
fundamentals of almost contact metric manifolds, we refer to Blair's book 
\cite{Blair-2010}. This paper is based on a similar idea of Ozgur and the
present author's previous paper \cite{GO-2019}.

\section{Preliminaries}

Let $\left( M^{2n+s},g\right) $ be a differentiable manifold, $\varphi $ a $%
(1,1)$-type tensor field, $\eta ^{\alpha }$ 1-forms, $\xi _{\alpha }$ vector
fields for $\alpha =1,2,...,s$, satisfying%
\begin{equation}
\varphi ^{2}X=-X+\sum\limits_{\alpha =1}^{s}\eta ^{\alpha }\left( X\right)
\xi _{\alpha },  \label{2.1}
\end{equation}

\begin{equation*}
\eta ^{\alpha }\left( \xi _{\beta }\right) =\delta _{\beta }^{\alpha },\text{
}\varphi \xi _{\alpha }=0,\text{ }\eta ^{\alpha }\left( \varphi X\right) =0,%
\text{ }\eta ^{\alpha }\left( X\right) =g\left( X,\xi _{\alpha }\right) ,
\end{equation*}%
\begin{equation}
g(\varphi X,\varphi Y)=g(X,Y)-\overset{s}{\underset{\alpha =1}{\sum }}\eta
^{\alpha }(X)\eta ^{\alpha }(Y),  \label{eq2}
\end{equation}%
where $X,Y\in TM$. Then $(\varphi ,\xi _{\alpha },\eta ^{\alpha },g)$ is
called \textit{framed }$\varphi $\textit{-structure} and $(M^{2n+s},\varphi
,\xi _{\alpha },\eta ^{\alpha },g)$ is called \textit{framed }$\varphi $%
\textit{-manifold. }The\textit{\ fundamental 2-form} and\textit{\ Nijenhuis
tensor }is given by:\textit{\ }%
\begin{equation*}
\Omega (X,Y)=g\left( X,\varphi Y\right) ,
\end{equation*}%
\begin{equation*}
N_{\varphi }\left( X,Y\right) =-2\sum\limits_{\alpha =1}^{s}d\eta ^{\alpha
}\left( X,Y\right) \xi _{\alpha }.
\end{equation*}%
If $d\Omega =0$ and $d\eta ^{\alpha }=0$, $M=(M,\varphi ,\xi _{\alpha },\eta
^{\alpha },g)$ is called a $C$-manifold. In a $C$-manifold, it is known that%
\begin{equation*}
\left( \nabla _{X}\varphi \right) Y=0
\end{equation*}%
and%
\begin{equation*}
\nabla _{X}\xi _{\alpha }=0,
\end{equation*}%
(see \cite{Blair-1970} and \cite{Blair-2010}).

\section{Magnetic Curves in $C$-manifolds}

Let $\gamma :I\rightarrow M$ \ be a unit-speed curve\ in an $n$-dimensional
Riemannian manifold $(M,g)$. The curve $\gamma $ is called a\textit{\ Frenet
curve of osculating order }$r$ $\left( 1\leq r\leq n\right) $, if there
exists orthonormal vector fields $T,v_{2},...,v_{r}$ along the curve
validating the Frenet equations 
\begin{eqnarray}
T &=&\gamma ^{\prime },  \notag \\
\nabla _{T}T &=&\kappa _{1}v_{2},  \notag \\
\nabla _{T}v_{2} &=&-\kappa _{1}v+\kappa _{2}v_{3},  \label{3.1} \\
&&...  \notag \\
\nabla _{T}v_{r} &=&-\kappa _{r-1}v_{r-1},  \notag
\end{eqnarray}%
where $\kappa _{1},...,\kappa _{r-1}$ are positive functions called the
curvatures of $\gamma $. If $\kappa _{1}=0,$ then $\gamma $ is called a 
\textit{geodesic}. If $\kappa _{1}$ is a non-zero positive constant and $%
r=2, $ $\gamma $ is called a \textit{circle.} If $\kappa _{1},...,\kappa
_{r-1}$ are non-zero positive constants, then $\gamma $ is called a \textit{%
helix of order }$r$ $\left( r\geq 3\right) .$ If $r=3$, it is shortly called
a \textit{helix}.\bigskip

A submanifold of an $C$-manifold is said to be an \textit{integral
submanifold} if $\eta ^{\alpha }(X)=0,$ $\alpha \in \left\{
1,2,...,s\right\} ,$ where $X$ is tangent to the submanifold. \ A \textit{%
Legendre curve} is a $1$-dimensional integral submanifold of an $C$-manifold 
$(M^{2n+s},\varphi ,\xi _{\alpha },\eta ^{\alpha },g)$\textit{.} More
precisely, a unit-speed curve $\gamma :I\rightarrow M$ is a Legendre curve%
\textit{\ }if $T$ is g-orthogonal to all $\xi _{\alpha }$ $\left( \alpha
=1,2,...s\right) $, where $T=\gamma ^{\prime }.$

\begin{definition}
Let $\gamma $ be a unit-speed curve in a $C$-manifold $(M,\varphi ,\xi
_{\alpha },\eta ^{\alpha },g)$. $\gamma $ is called a $\theta _{\alpha }-$%
slant curve if there exist constant contact angles such that $\eta ^{\alpha
}(T)=\cos \theta _{\alpha }$, $\alpha =1,2,...,s$. If $\theta _{\alpha
}=\theta $ for all $\alpha =1,2,...,s$, then $\gamma $ is shortly called
slant. Moreover, if $\theta _{\alpha }=\frac{\pi }{2}$ for all $\alpha
=1,2,...,s$, then $\gamma $ is called a Legendre curve.
\end{definition}

For $\theta _{\alpha }-$slant curves, we can give the following inequality
for the constant contact angles:%
\begin{equation*}
\sum\limits_{\alpha =1}^{s}\cos ^{2}\theta _{\alpha }\leq 1.
\end{equation*}%
The equality case is only valid when $\gamma $ is a geodesic as an integral
curve of $\pm \sum\limits_{\alpha =1}^{s}\cos \theta _{\alpha }\xi _{\alpha
} $.

Let $\gamma $ be a unit-speed Legendre curve in a $C$-manifold $(M,\varphi
,\xi _{\alpha },\eta ^{\alpha },g)$. If we differentiate $\eta ^{\alpha
}(T)=0$, we obtain $\eta ^{\alpha }(v_{2})=0$. We can continue this process
until we find $\eta ^{\alpha }(v_{r})=0$. Thus, we can state the following
proposition:

\begin{proposition}
If $\gamma $ is a unit-speed Legendre curve in a $C$-manifold $(M,\varphi
,\xi _{\alpha },\eta ^{\alpha },g)$, then $\xi _{\alpha }$ is $g$-orthogonal
to $sp\left\{ T,v_{2},...,v_{r}\right\} $, for all $\alpha =1,2,...,s$.
\end{proposition}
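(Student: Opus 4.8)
The plan is to run an induction on the index of the Frenet frame, leaning on the single structural identity that in a $C$-manifold every characteristic vector field is parallel, $\nabla_X\xi_\alpha=0$; specialized to $X=T$ this says $\xi_\alpha$ is parallel along $\gamma$. Fix $\alpha\in\{1,\dots,s\}$, write $v_1:=T$, and introduce the scalar functions $f_k(t):=g(v_k(t),\xi_\alpha)$ for $k=1,\dots,r$. The Legendre hypothesis is exactly $f_1=\eta^\alpha(T)\equiv 0$, and the claim to be proved is $f_k\equiv 0$ for every $k$, since that is precisely what it means for $\xi_\alpha$ to be $g$-orthogonal to $sp\left\{ T,v_{2},...,v_{r}\right\}$; as $\alpha$ is arbitrary this will finish the proof.

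For the base step I would differentiate $f_1\equiv 0$ along $\gamma$: using $\nabla_T\xi_\alpha=0$ and the Frenet equation $\nabla_TT=\kappa_1v_2$,
\[
0=\frac{d}{dt}\,g(T,\xi_\alpha)=g(\nabla_TT,\xi_\alpha)=\kappa_1 f_2,
\]
and since $\kappa_1>0$ we get $f_2\equiv 0$. For the inductive step, assume $f_1=\dots=f_j\equiv 0$ with $2\le j\le r-1$. Differentiating $f_j\equiv 0$ and inserting $\nabla_Tv_j=-\kappa_{j-1}v_{j-1}+\kappa_j v_{j+1}$ together with $\nabla_T\xi_\alpha=0$ yields
\[
0=g(\nabla_Tv_j,\xi_\alpha)=-\kappa_{j-1}f_{j-1}+\kappa_j f_{j+1}=\kappa_j f_{j+1},
\]
because $f_{j-1}\equiv 0$ by hypothesis; as $\kappa_j>0$ this forces $f_{j+1}\equiv 0$. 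The induction then gives $f_k\equiv 0$ for all $k=1,\dots,r$, which is the assertion.

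I do not anticipate a real obstacle: the argument is just the rigorous form of the ``differentiate repeatedly'' remark preceding the statement. The two points that must be checked are that each $\kappa_j$ $(1\le j\le r-1)$ is nowhere vanishing, so that the divisions are legitimate at every stage --- this is built into the definition of a Frenet curve of osculating order $r$ --- and that the parallelism $\nabla_X\xi_\alpha=0$ is available, which is one of the $C$-manifold identities recalled in Section~2. One could equally phrase the computation in terms of $\eta^\alpha$ instead of $g(\cdot,\xi_\alpha)$, using $(\nabla_X\eta^\alpha)Y=g(\nabla_X\xi_\alpha,Y)=0$, but the frame-vector formulation above is the cleanest.
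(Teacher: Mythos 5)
Your proof is correct and follows essentially the same route as the paper, which simply differentiates $\eta^{\alpha}(T)=0$ repeatedly along $\gamma$ using $\nabla_T\xi_\alpha=0$ and the Frenet equations; your induction on the frame index is just the rigorous formulation of that remark. The positivity of the curvatures $\kappa_1,\dots,\kappa_{r-1}$, which you correctly invoke, is indeed part of the paper's definition of a Frenet curve of osculating order $r$.
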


If we consider equations (\ref{F}), (\ref{Lorentz eq}) and (\ref{3.1})
together, for a normal magnetic curve of a magnetic field $F$ with charge $q$%
, we find%
\begin{equation*}
\nabla _{T}T=\Phi T,
\end{equation*}%
\begin{equation*}
F\left( X,Y\right) =g\left( \Phi X,Y\right) ,
\end{equation*}%
\begin{eqnarray*}
F_{q}\left( X,Y\right) &=&q\Omega \left( X,Y\right) \\
&=&qg\left( X,\varphi Y\right) ,
\end{eqnarray*}%
which gives us

\begin{equation*}
\Phi _{q}=-q\varphi .
\end{equation*}%
Here, $T$ denotes the tangential vector field of the normal magnetic curve $%
\gamma $ for the magnetic field $F_{q}$ in $M$. Then, we have the following
equations:%
\begin{equation}
\nabla _{T}T=-q\varphi T,  \label{2.6}
\end{equation}%
\begin{equation*}
\nabla _{T}\xi _{\alpha }=0,
\end{equation*}%
\begin{eqnarray*}
\nabla _{T}\varphi T &=&\left( \nabla _{T}\varphi \right) T+\varphi \nabla
_{T}T \\
&=&\varphi \left( -q\varphi T\right) \\
&=&-q\varphi ^{2}T \\
&=&-q\left( -T+\sum\limits_{\alpha =1}^{s}\eta ^{\alpha }\left( T\right) \xi
_{\alpha }\right) \\
&=&qT-q\sum\limits_{\alpha =1}^{s}\eta ^{\alpha }\left( T\right) \xi
_{\alpha }.
\end{eqnarray*}%
If we take the inner product of equation (\ref{2.6}) with $\xi _{\alpha }$,
we obtain 
\begin{eqnarray*}
0 &=&g\left( -q\varphi T,\xi _{\alpha }\right) =g\left( \nabla _{T}T,\xi
_{\alpha }\right) \\
&=&\frac{d}{dt}g\left( T,\xi _{\alpha }\right) .
\end{eqnarray*}%
Integrating both sides, we get%
\begin{equation*}
\eta ^{\alpha }(T)=\cos \theta _{\alpha }=constant,
\end{equation*}%
for all $\alpha =1,2,...,s$. Equations (\ref{3.1}) and (\ref{2.6}) give us%
\begin{equation}
\nabla _{T}T=\kappa _{1}v_{2}=-q\varphi T,  \label{star}
\end{equation}%
\begin{eqnarray*}
g\left( \varphi T,\varphi T\right) &=&g\left( T,T\right)
-\sum\limits_{\alpha =1}^{s}\left( \eta ^{\alpha }\left( T\right) \right)
^{2} \\
&=&1-\sum\limits_{\alpha =1}^{s}\cos ^{2}\theta _{\alpha }
\end{eqnarray*}%
and%
\begin{equation*}
\left\Vert \varphi T\right\Vert =\sqrt{1-\sum\limits_{\alpha =1}^{s}\cos
^{2}\theta _{\alpha }}.
\end{equation*}%
From equation (\ref{star}), we find%
\begin{equation}
\kappa _{1}=\left\vert q\right\vert \sqrt{1-\sum\limits_{\alpha =1}^{s}\cos
^{2}\theta _{\alpha }}=constant,  \label{3.6}
\end{equation}%
\begin{equation*}
-q\varphi T=\kappa _{1}v_{2}=\left\vert q\right\vert \sqrt{%
1-\sum\limits_{\alpha =1}^{s}\cos ^{2}\theta _{\alpha }}v_{2}
\end{equation*}%
and%
\begin{equation}
\varphi T=-sgn(q)\sqrt{1-\sum\limits_{\alpha =1}^{s}\cos ^{2}\theta _{\alpha
}}v_{2}.  \label{3.7}
\end{equation}%
If $\kappa _{2}=0,$ then $r=2$ and $\gamma $ is a circle. If we apply $\eta
^{\alpha }$ to equation (\ref{3.7}), we obtain%
\begin{equation*}
\eta ^{\alpha }\left( v_{2}\right) =0,
\end{equation*}%
which gives us%
\begin{eqnarray*}
\nabla _{T}\eta ^{\alpha }\left( v_{2}\right) &=&0 \\
&=&g\left( \nabla _{T}v_{2},\xi _{\alpha }\right) +g\left( T,\nabla _{T}\xi
_{\alpha }\right) \\
&=&-\kappa _{1}\cos \theta _{\alpha }.
\end{eqnarray*}%
As a result, we get $\cos \theta _{\alpha }=0,$ for all $\alpha =1,2,...,s$.
Hence, $\gamma $ is a Legendre circle, $\left\Vert \varphi T\right\Vert =1$
and $\kappa _{1}=\left\vert q\right\vert $. Let $\kappa _{2}\neq 0$. Using
equations (\ref{2.1}) and (\ref{3.1}), we calculate%
\begin{eqnarray}
\nabla _{T}\varphi T &=&\left( \nabla _{T}\varphi \right) T+\varphi \nabla
_{T}T  \notag \\
&=&\varphi \left( -q\varphi T\right)  \label{3.10} \\
&=&-q\left( -T+\sum\limits_{\alpha =1}^{s}\cos \theta _{\alpha }\xi _{\alpha
}\right) .  \notag
\end{eqnarray}%
Differentiating equation (\ref{3.7}), we also have%
\begin{equation}
\nabla _{T}\varphi T=-sgn(q)\sqrt{1-\sum\limits_{\alpha =1}^{s}\cos
^{2}\theta _{\alpha }}\left( -\kappa _{1}T+\kappa _{2}v_{3}\right)
\label{3.11}
\end{equation}%
In view of (\ref{3.6}), (\ref{3.10}) and (\ref{3.11}), it is easy to see that%
\begin{equation}
q\left[ \sum\limits_{\alpha =1}^{s}\cos \theta _{\alpha }\xi _{\alpha
}-\left( \sum\limits_{\alpha =1}^{s}\cos ^{2}\theta _{\alpha }\right) T%
\right] =sgn(q)\sqrt{1-\sum\limits_{\alpha =1}^{s}\cos ^{2}\theta _{\alpha }}%
\kappa _{2}v_{3}  \label{3.12}
\end{equation}%
\begin{equation}
\kappa _{2}=\left\vert q\right\vert \sqrt{\sum\limits_{\alpha =1}^{s}\cos
^{2}\theta _{\alpha }}  \label{3.13}
\end{equation}%
If we write (\ref{3.13}) in (\ref{3.12}), we have%
\begin{equation}
\sum\limits_{\alpha =1}^{s}\cos \theta _{\alpha }\xi _{\alpha }=\left(
\sum\limits_{\alpha =1}^{s}\cos ^{2}\theta _{\alpha }\right) T+\sqrt{%
\sum\limits_{\alpha =1}^{s}\cos ^{2}\theta _{\alpha }}\sqrt{%
1-\sum\limits_{\alpha =1}^{s}\cos ^{2}\theta _{\alpha }}v_{3}  \label{3.14}
\end{equation}%
If we differentiate (\ref{3.14}), we find $\kappa _{3}=0$. From equations (%
\ref{3.7}) and (\ref{3.14}), we can write%
\begin{equation}
v_{2}=\frac{-sgn(q)}{\sqrt{1-\sum\limits_{\alpha =1}^{s}\cos ^{2}\theta
_{\alpha }}}\varphi T  \label{v2}
\end{equation}%
\begin{equation}
v_{3}=\frac{1}{\sqrt{\sum\limits_{\alpha =1}^{s}\cos ^{2}\theta _{\alpha }}%
\sqrt{1-\sum\limits_{\alpha =1}^{s}\cos ^{2}\theta _{\alpha }}}\left(
\sum\limits_{\alpha =1}^{s}\cos \theta _{\alpha }\xi _{\alpha }-\left(
\sum\limits_{\alpha =1}^{s}\cos ^{2}\theta _{\alpha }\right) T\right)
\label{v3}
\end{equation}%
Finally, if $\kappa _{1}=0$, after some calculations, by (\ref{2.1}) and (%
\ref{3.7}), we obtain $T=\pm \sum\limits_{\alpha =1}^{s}\cos \theta _{\alpha
}\xi _{\alpha }$, where $\sum\limits_{\alpha =1}^{s}\cos ^{2}\theta _{\alpha
}=1$. So, we can give the following theorem:

\begin{theorem}
\label{theorem1}Let $\gamma :I\rightarrow M=(M,\varphi ,\xi _{\alpha },\eta
^{\alpha },g)$ be a unit-speed curve in a $C$-manifold. Then $\gamma $ is a
normal magnetic curve for $F_{q}$ $(q\neq 0)$ in $M$ if and only if

i) $\gamma $ is a geodesic $\theta _{\alpha }-$slant curve as an integral
curve of $\pm \sum\limits_{\alpha =1}^{s}\cos \theta _{\alpha }\xi _{\alpha
} $, where $\sum\limits_{\alpha =1}^{s}\cos ^{2}\theta _{\alpha }=1$; or

ii) $\gamma $ is a Legendre circle with $\kappa _{1}=\left\vert q\right\vert 
$ having the Frenet frame field%
\begin{equation*}
\left\{ T,-sgn(q)\varphi T\right\} ;
\end{equation*}%
or

iii) $\gamma $ is a non-Legendre $\theta _{\alpha }-$slant helix with 
\begin{equation*}
\kappa _{1}=\left\vert q\right\vert \sqrt{1-\sum\limits_{\alpha =1}^{s}\cos
^{2}\theta _{\alpha }},
\end{equation*}%
\begin{equation*}
\kappa _{2}=\left\vert q\right\vert \sqrt{\sum\limits_{\alpha =1}^{s}\cos
^{2}\theta _{\alpha }},
\end{equation*}%
having the Frenet frame field%
\begin{equation*}
\left\{ T,v_{2},v_{3}\right\} ,
\end{equation*}%
where $\sum\limits_{\alpha =1}^{s}\cos ^{2}\theta _{\alpha }<1$, $v_{2}$ and 
$v_{3}$ are given in equations (\ref{v2}) and (\ref{v3}), respectively.
\end{theorem}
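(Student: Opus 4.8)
The plan is to prove both directions of the equivalence. The forward implication — that a normal magnetic curve for $F_{q}$ with $q\neq 0$ must be of one of the three types — is obtained by organizing the computation already carried out in (\ref{2.6})--(\ref{v3}) into a case distinction governed by the vanishing of $\kappa_{1}$ and $\kappa_{2}$. So suppose $\gamma$ is a unit-speed curve satisfying $\nabla_{T}T=\Phi_{q}T=-q\varphi T$. Taking $g(\cdot,\xi_{\alpha})$ of (\ref{2.6}) and using $\nabla_{T}\xi_{\alpha}=0$ gives $\frac{d}{dt}\eta^{\alpha}(T)=0$, so $\gamma$ is $\theta_{\alpha}$-slant, and then (\ref{3.6}) yields $\kappa_{1}=|q|\sqrt{1-\sum_{\alpha}\cos^{2}\theta_{\alpha}}$, a constant. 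If $\kappa_{1}=0$, then since $q\neq 0$ this forces $\sum_{\alpha}\cos^{2}\theta_{\alpha}=1$ and $\varphi T=0$; feeding $\varphi T=0$ into (\ref{2.1}) gives $T=\pm\sum_{\alpha}\cos\theta_{\alpha}\xi_{\alpha}$, and $\nabla_{T}\xi_{\alpha}=0$ then forces $\nabla_{T}T=0$, so $\gamma$ is a geodesic of the form in (i).

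Assume now $\kappa_{1}\neq 0$, equivalently $\sum_{\alpha}\cos^{2}\theta_{\alpha}<1$ and $\varphi T\neq 0$; then (\ref{star})--(\ref{3.7}) produce the unit normal $v_{2}$ of (\ref{v2}). If $\kappa_{2}=0$ the osculating order is $2$: applying $\eta^{\alpha}$ to (\ref{3.7}), differentiating, and using (\ref{3.1}) together with $\nabla_{T}\xi_{\alpha}=0$ gives $\kappa_{1}\cos\theta_{\alpha}=0$, hence $\cos\theta_{\alpha}=0$ for every $\alpha$, so $\gamma$ is a Legendre circle with $\kappa_{1}=|q|$ and Frenet frame $\{T,-sgn(q)\varphi T\}$, which is (ii). If $\kappa_{2}\neq 0$, I would compute $\nabla_{T}\varphi T$ in two ways — from $(\nabla_{T}\varphi)T+\varphi\nabla_{T}T$ as in (\ref{3.10}), and by differentiating (\ref{3.7}) as in (\ref{3.11}) — and compare them via (\ref{3.6}) to obtain (\ref{3.12}); matching lengths then gives $\kappa_{2}=|q|\sqrt{\sum_{\alpha}\cos^{2}\theta_{\alpha}}$, which in particular requires some $\cos\theta_{\alpha}\neq 0$, so $\gamma$ is non-Legendre. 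Substituting (\ref{3.13}) back into (\ref{3.12}) yields (\ref{3.14}), and differentiating (\ref{3.14}) gives $\kappa_{3}=0$; hence the osculating order is exactly $3$ and $\gamma$ is a non-Legendre $\theta_{\alpha}$-slant helix with the stated curvatures and Frenet frame $\{T,v_{2},v_{3}\}$ given by (\ref{v2})--(\ref{v3}), which is (iii). These three cases are mutually exclusive and exhaustive.

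For the converse it suffices, in each of the three cases, to verify that $\gamma$ has unit speed and satisfies $\nabla_{T}T=-q\varphi T=\Phi_{q}T$, since such a curve is by definition a normal magnetic curve for $F_{q}$. In case (i), $T=\pm\sum_{\alpha}\cos\theta_{\alpha}\xi_{\alpha}$ has unit length because $g(\xi_{\alpha},\xi_{\beta})=\delta_{\alpha\beta}$ and $\sum_{\alpha}\cos^{2}\theta_{\alpha}=1$; then $\varphi\xi_{\alpha}=0$ gives $\varphi T=0$ and $\nabla_{T}\xi_{\alpha}=0$ gives $\nabla_{T}T=0=-q\varphi T$. In case (ii), $\gamma$ being Legendre gives $\|\varphi T\|=1$ by (\ref{eq2}), so $v_{2}=-sgn(q)\varphi T$ is a genuine unit vector and $\nabla_{T}T=\kappa_{1}v_{2}=|q|\,(-sgn(q)\varphi T)=-q\varphi T$. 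In case (iii), the Frenet equations give $\nabla_{T}T=\kappa_{1}v_{2}$, and inserting $\kappa_{1}=|q|\sqrt{1-\sum_{\alpha}\cos^{2}\theta_{\alpha}}$ together with the formula (\ref{v2}) for $v_{2}$ gives $\nabla_{T}T=-q\varphi T$ at once. This would complete the equivalence.

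The delicate point, and the step I expect to be the main obstacle, is the branch $\kappa_{1}\neq 0$, $\kappa_{2}\neq 0$: one must extract from (\ref{3.12})--(\ref{3.14}) not only the value of $\kappa_{2}$ but also the vanishing of $\kappa_{3}$, so that the osculating order is pinned to exactly $3$ rather than merely at least $3$, and one must carry along the side conditions — $\kappa_{1}\neq 0$ if and only if $\sum_{\alpha}\cos^{2}\theta_{\alpha}<1$, and $\kappa_{2}\neq 0$ if and only if $\gamma$ is non-Legendre — that keep this case disjoint from (i) and (ii). Everything else reduces to routine manipulation with the Frenet equations (\ref{3.1}), the framed $\varphi$-structure identities (\ref{2.1})--(\ref{eq2}), and the $C$-manifold relations $\nabla_{T}\xi_{\alpha}=0$ and $(\nabla_{T}\varphi)T=0$.
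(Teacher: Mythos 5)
Your proposal is correct and follows essentially the same route as the paper: the forward direction is exactly the paper's computation (deriving constancy of $\eta^{\alpha}(T)$, then the case split on $\kappa_{1}$ and $\kappa_{2}$ leading to (\ref{3.6})--(\ref{v3})), organized into the three mutually exclusive cases. Your explicit verification of the converse, which the paper leaves implicit since the frame and curvature data in each case immediately give $\nabla_{T}T=\kappa_{1}v_{2}=-q\varphi T$, is a welcome but routine addition.
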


\begin{corollary}
\label{corollary1}If $\gamma $ is a unit-speed slant curve in $M$, then it
is a normal magnetic curve if anf only if

i) it is a geodesic as an integral curve of $\frac{\pm 1}{\sqrt{s}}%
\sum\limits_{\alpha =1}^{s}\xi _{\alpha }$; or

ii) $\gamma $ is a Legendre circle with $\kappa _{1}=\left\vert q\right\vert 
$ having the Frenet frame field%
\begin{equation*}
\left\{ T,-sgn(q)\varphi T\right\} ;
\end{equation*}%
or

iii) $\gamma $ is a non-Legendre slant helix with $\kappa _{1}=\left\vert
q\right\vert \sqrt{1-s\cos ^{2}\theta },$ $\kappa _{2}=\left\vert
q\right\vert \sqrt{s}\varepsilon \cos \theta ,$ having the Frenet frame field%
\begin{equation*}
\left\{ T,\frac{-sgn(q)}{\sqrt{1-s\cos ^{2}\theta }}\varphi T,\frac{%
\varepsilon }{\sqrt{s}\sqrt{1-s\cos ^{2}\theta }}\left( \sum\limits_{\alpha
=1}^{s}\xi _{\alpha }-s\cos \theta T\right) \right\} ,
\end{equation*}%
where $\theta \neq \frac{\pi }{2}$ is the contact angle satisfying $%
\left\vert \cos \theta \right\vert <\frac{1}{\sqrt{s}}$ and $\varepsilon
=sgn\left( \cos \theta \right) .$
\end{corollary}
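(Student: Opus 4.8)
The plan is to obtain Corollary~\ref{corollary1} as a direct specialization of Theorem~\ref{theorem1} to the case of equal contact angles, so the proof is essentially bookkeeping on top of the structural equations already derived. The first step is to set $\theta_{\alpha}=\theta$ for every $\alpha=1,\dots,s$ (which is precisely the hypothesis that $\gamma$ is slant) and to record the two identities that make everything collapse: $\sum_{\alpha=1}^{s}\cos^{2}\theta_{\alpha}=s\cos^{2}\theta$ and $\sum_{\alpha=1}^{s}\cos\theta_{\alpha}\,\xi_{\alpha}=\cos\theta\sum_{\alpha=1}^{s}\xi_{\alpha}$. With these, the three alternatives of Theorem~\ref{theorem1} map one-to-one onto the three alternatives of the corollary, and since Theorem~\ref{theorem1} is an equivalence, both the ``if'' and the ``only if'' directions follow at once.

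Carrying this out case by case: in case~i) the constraint $\sum\cos^{2}\theta_{\alpha}=1$ becomes $s\cos^{2}\theta=1$, i.e. $\cos\theta=\pm1/\sqrt{s}$; substituting into $\pm\sum_{\alpha}\cos\theta_{\alpha}\,\xi_{\alpha}=\pm\cos\theta\sum_{\alpha}\xi_{\alpha}$ and absorbing the sign produces the integral curve of $\frac{\pm1}{\sqrt{s}}\sum_{\alpha=1}^{s}\xi_{\alpha}$. Case~ii) is unchanged: a Legendre curve is simply the slant curve with $\theta=\pi/2$, so the Legendre circle with $\kappa_{1}=|q|$ and Frenet frame $\{T,-sgn(q)\varphi T\}$ is copied verbatim from Theorem~\ref{theorem1}(ii). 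For case~iii) I would insert the two identities into the curvature formulas (\ref{3.6}) and (\ref{3.13}) and into the frame vectors (\ref{v2}) and (\ref{v3}): $\kappa_{1}=|q|\sqrt{1-s\cos^{2}\theta}$ is immediate, $\kappa_{2}=|q|\sqrt{s\cos^{2}\theta}=|q|\sqrt{s}\,|\cos\theta|$, $v_{2}=\frac{-sgn(q)}{\sqrt{1-s\cos^{2}\theta}}\varphi T$, and $v_{3}$ becomes $\frac{1}{\sqrt{s}\,|\cos\theta|\sqrt{1-s\cos^{2}\theta}}\,\cos\theta\bigl(\sum_{\alpha}\xi_{\alpha}-s\cos\theta\,T\bigr)$.

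The only place where care is needed --- and the single point where a sign could slip --- is the replacement of $\sqrt{\cos^{2}\theta}$ by $|\cos\theta|$ rather than $\cos\theta$; this is exactly where $\varepsilon:=sgn(\cos\theta)$ enters. Since the hypothesis $\theta\neq\pi/2$ forces $\cos\theta\neq0$, we have $|\cos\theta|=\varepsilon\cos\theta$ and $\cos\theta/|\cos\theta|=\varepsilon$, which turns $\kappa_{2}$ into $|q|\sqrt{s}\,\varepsilon\cos\theta$ and $v_{3}$ into $\frac{\varepsilon}{\sqrt{s}\sqrt{1-s\cos^{2}\theta}}\bigl(\sum_{\alpha=1}^{s}\xi_{\alpha}-s\cos\theta\,T\bigr)$, matching the statement. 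Finally, the condition $|\cos\theta|<1/\sqrt{s}$ is just the rewriting of $\sum\cos^{2}\theta_{\alpha}<1$ that keeps $\kappa_{1}>0$, so that $\gamma$ is genuinely a non-Legendre helix. Assembling these substitutions reproduces items i)--iii) exactly, and no deeper obstacle arises.
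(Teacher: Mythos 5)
Your proposal is correct and follows essentially the same route as the paper: the paper's proof simply substitutes $\sum_{\alpha=1}^{s}\cos^{2}\theta_{\alpha}=s\cos^{2}\theta$ and $\sum_{\alpha=1}^{s}\cos\theta_{\alpha}\,\xi_{\alpha}=\cos\theta\sum_{\alpha=1}^{s}\xi_{\alpha}$ into Theorem \ref{theorem1} and declares the rest clear. Your write-up merely makes explicit the $\varepsilon=sgn(\cos\theta)$ bookkeeping that the paper leaves implicit, and it is accurate.
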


\begin{proof}
Since $\theta _{\alpha }=\theta $ for all $\alpha =1,2,...,s,$ if we use 
\begin{equation*}
\sum\limits_{\alpha =1}^{s}\cos ^{2}\theta _{\alpha }=s\cos ^{2}\theta
\end{equation*}%
and 
\begin{equation*}
\sum\limits_{\alpha =1}^{s}\cos \theta _{\alpha }\xi _{\alpha }=\cos \theta
\sum\limits_{\alpha =1}^{s}\xi _{\alpha }
\end{equation*}%
in Theorem \ref{theorem1}, the proof is clear.
\end{proof}

\textbf{Remark. }If we take $s=1$, we have Proposition 1 in \cite%
{magneticcosymplectic}.

Let $M=(M,\varphi ,\xi _{\alpha },\eta ^{\alpha },g)$ be a $C$-manifold. A
Frenet curve of order $r=2$ is called a $\varphi $-curve in $M$ if $%
sp\left\{ T,v_{2},\xi _{1},...,\xi _{s}\right\} $ is a $\varphi -$invariant
space. A Frenet curve of order $r\geq 3$ is called a $\varphi $-curve if $%
sp\left\{ T,v_{2},...,v_{r}\right\} $ is $\varphi -$invariant. A $\varphi -$%
helix of order $r$ is a $\varphi -$curve with constant curvatures $\kappa
_{1},...,\kappa _{r-1}$. A $\varphi -$helix of order $3$ is shortly named a $%
\varphi -$helix.

\begin{proposition}
\label{prop}If $\gamma $ is a Legendre $\varphi -$helix in a $C$-manifold,
then it is a Legendre $\varphi -$circle.
\end{proposition}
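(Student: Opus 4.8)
The plan is to assume $\gamma$ is a Legendre $\varphi$-helix --- a Legendre Frenet curve of osculating order $3$ with constant $\kappa_1,\kappa_2>0$ and with $W:=sp\{T,v_2,v_3\}$ a $\varphi$-invariant subspace --- and to show that the hypotheses force $\kappa_2=0$, so that $\gamma$ really has osculating order $2$ and (since $\kappa_1$ stays a positive constant) is a circle; a short extra argument then identifies it as a $\varphi$-circle. Because $\gamma$ is Legendre, Proposition~1 gives $\eta^\alpha(T)=\eta^\alpha(v_2)=\eta^\alpha(v_3)=0$ for every $\alpha$, and substituting this into (\ref{2.1}) and (\ref{eq2}) yields $\varphi^2X=-X$ and $g(\varphi X,\varphi Y)=g(X,Y)$ for all $X,Y\in W$.

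The key step is then almost immediate: $\varphi|_W$ is a $g$-orthogonal endomorphism of the $3$-dimensional Euclidean space $W$ (that $W$ is $\varphi$-invariant is exactly the $\varphi$-curve hypothesis) whose square is $-\mathrm{id}$, so its matrix in the orthonormal frame $\{T,v_2,v_3\}$ is a real $3\times3$ skew-symmetric matrix; but a real skew-symmetric matrix of odd size is singular, whereas $(\varphi|_W)^2=-\mathrm{id}$ makes $\varphi|_W$ invertible --- a contradiction. In the computational idiom of the rest of the paper the same obstruction reads: $\varphi T\perp T$ lets one write $\varphi T=av_2+bv_3$, skew-symmetry then forces $\varphi v_2=-aT+cv_3$ and $\varphi v_3=-bT-cv_2$, the unit conditions $\|\varphi v_i\|=1$ give $a^2=b^2=c^2=\frac12$ (so $a,b,c$ are nonzero constants), and differentiating $\varphi T=av_2+bv_3$ using $(\nabla_T\varphi)=0$ and the Frenet equations (\ref{3.1}), the $v_2$-component of the identity $\nabla_T(\varphi T)=\kappa_1\varphi v_2$ gives $a'=b\kappa_2$, hence $\kappa_2=0$. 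Either way, $\gamma$ cannot have osculating order $3$.

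Thus a Legendre $\varphi$-helix is in fact of osculating order $2$, hence a circle. To upgrade this to ``$\varphi$-circle'' I would check that $sp\{T,v_2,\xi_1,\dots,\xi_s\}$ is $\varphi$-invariant: $\eta^\beta(\varphi T)=0$ for all $\beta$ puts $\varphi T$ in the common kernel of the $\eta^\beta$, the $\varphi$-curve hypothesis puts it in $sp\{T,v_2,\xi_1,\dots,\xi_s\}$, and intersecting these (using that $T,v_2$ are Legendre) forces $\varphi T\in sp\{T,v_2\}$; since $\varphi T\perp T$ and $\|\varphi T\|=1$ this gives $\varphi T=\pm v_2$ and then $\varphi v_2=\pm\varphi^2T=\mp T$, so $sp\{T,v_2\}$ --- and with it $sp\{T,v_2,\xi_1,\dots,\xi_s\}$, as $\varphi\xi_\alpha=0$ --- is $\varphi$-invariant. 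I expect the real obstacle to be conceptual rather than computational: the middle step actually shows the data of a Legendre $\varphi$-helix of osculating order $3$ are self-contradictory, so the honest reading of the proposition is that \emph{no} Legendre $\varphi$-curve can have $sp\{T,v_2,v_3\}$ as a $\varphi$-invariant osculating space, and ``it is a Legendre $\varphi$-circle'' is the assertion that the order-$3$ case degenerates; a lesser point needing care is exactly the verification just sketched, namely that the reduced curve meets the order-$2$ $\varphi$-curve condition (with the $\xi_\alpha$ adjoined), not merely that it is an ordinary circle.
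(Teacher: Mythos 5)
Your proposal is correct, but it reaches the conclusion by a genuinely different route than the paper. The paper's proof writes $\varphi T=\cos \mu \,v_{2}\pm \sin \mu \,v_{3}$ for an angle function $\mu (t)$, differentiates $g(\varphi T,v_{2})=\cos \mu $ and then $\varphi T$ itself along the curve, and runs a case analysis ($\mu =0$, $\mu =\pi $, $\mu \neq 0,\pi $), concluding $\kappa _{2}=0$ in every case. Your central argument is instead a pointwise linear-algebra obstruction: since $\gamma $ is Legendre, the $\eta ^{\alpha }$ vanish on $W=sp\{T,v_{2},v_{3}\}$, so by (\ref{2.1}) and (\ref{eq2}) the restriction $\varphi |_{W}$ is a $g$-orthogonal endomorphism of a $3$-dimensional space with $(\varphi |_{W})^{2}=-\mathrm{id}$, hence skew-symmetric, hence singular because the dimension is odd --- contradicting invertibility. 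This is cleaner and strictly more general than the paper's argument: it uses no differentiation, no Frenet equations, and no constancy of the curvatures, so it shows that \emph{no} Legendre $\varphi $-curve of osculating order $3$ exists, not merely no helix. Your computational variant (writing $\varphi T=av_{2}+bv_{3}$, forcing $a^{2}=b^{2}=c^{2}=\tfrac{1}{2}$ from the unit conditions, then $a^{\prime }=b\kappa _{2}$ with $a$ constant and $b\neq 0$) is essentially the paper's $\mu $-computation in Cartesian form, strengthened by the extra use of $\Vert \varphi v_{2}\Vert =\Vert \varphi v_{3}\Vert =1$. Your closing remark about upgrading ``circle'' to ``$\varphi $-circle'' addresses a point the paper leaves untouched (its proof simply stops at $\kappa _{2}=0$); your reading --- that the order-$3$ hypothesis is self-contradictory and the proposition records the degeneration to the order-$2$ case --- matches what the paper's own proof actually establishes, and your verification of the order-$2$ $\varphi $-invariance condition, while partly presupposing that condition, is at least as careful as the original on this point.
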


\begin{proof}
Let $\gamma $ be a Legendre $\varphi -$helix. Then the contact angles $%
\theta _{\alpha }=\frac{\pi }{2}$ for all $\alpha =1,2,...,s$ and the Frenet
frame field $\left\{ T,v_{2},v_{3}\right\} $ is $\varphi -$invariant. Thus,
we can write%
\begin{equation}
g\left( \varphi T,v_{2}\right) =\cos \mu ,  \label{fitev2}
\end{equation}%
\begin{equation}
\varphi T=\cos \mu v_{2}\pm \sin \mu v_{3},  \label{fite}
\end{equation}%
for some function $\mu =\mu (t)$. If we differentiate equation (\ref{fitev2}%
), we find%
\begin{eqnarray}
-\mu ^{\prime }\sin \mu &=&\kappa _{2}g\left( \varphi T,v_{3}\right)
\label{starr} \\
&=&\pm \kappa _{2}\sin \mu .  \notag
\end{eqnarray}%
Firstly, let us assume that $\mu =0$, i.e. $\varphi T=v_{2}$. Hence, we have%
\begin{equation*}
\nabla _{T}\varphi T=-\kappa _{1}T=-\kappa _{1}T+\kappa _{2}v_{3},
\end{equation*}%
which is equivalent to $\kappa _{2}=0$. Likewise, if $\mu =\pi $, we obtain $%
\kappa _{2}=0$. Finally, let us assume that $\mu \neq 0,\pi $. In this case,
since $\gamma $ is a helix, using (\ref{starr}), we have%
\begin{equation*}
\kappa _{1}=constant,
\end{equation*}%
\begin{equation*}
\kappa _{2}=\mp \mu ^{\prime }=constant.
\end{equation*}%
If we differentiate (\ref{fite}), we calculate%
\begin{equation*}
\kappa _{1}\varphi v_{2}=-\kappa _{1}\cos \mu T.
\end{equation*}%
If we apply $\varphi $ to both sides, we conclude $\varphi T=\pm v_{2}$,
which gives $\kappa _{2}=0$. This completes the proof.
\end{proof}

\textbf{Remark.} For $s=1$, we obtain Proposition 2 of \cite%
{magneticcosymplectic}. Likewise, the following theorem generalizes Theorem
1 of \cite{magneticcosymplectic} to $C$-manifolds:

\begin{theorem}
If $\gamma $ be $\varphi -$helix of order $r\leq 3$ in a $C$-manifold $%
M=(M,\varphi ,\xi _{\alpha },\eta ^{\alpha },g)$. Then, the following
statements are valid:

i) If $\cos \theta _{\alpha }$ ($\alpha =1,2,...,s$) are constants such that 
$\sum\limits_{\alpha =1}^{s}\cos ^{2}\theta _{\alpha }=1$, then $\gamma $ is
an integral curve of $\pm \sum\limits_{\alpha =1}^{s}\cos \theta _{\alpha
}\xi _{\alpha }$, hence it is a normal magnetic curve for arbitrary $q$.

ii) If $\cos \theta _{\alpha }=0$ for all $\alpha =1,2,...,s$, i.e. $\gamma $
is a Legendre $\varphi -$curve, then it is a magnetic circle generated by
the magnetic field $F_{\pm \kappa _{1}}$.

iii) If $\cos \theta _{\alpha }$ ($\alpha =1,2,...,s$) are constants such
that $\sum\limits_{\alpha =1}^{s}\cos ^{2}\theta _{\alpha }=\frac{\kappa
_{2}^{2}}{\kappa _{1}^{2}+\kappa _{2}^{2}}$, then $\gamma $ is a magnetic
curve for $F_{\pm \sqrt{\kappa _{1}^{2}+\kappa _{2}^{2}}}$.

iv) Except above cases, $\gamma $ cannot be a magnetic curve for any
magnetic field $F_{q}$.
\end{theorem}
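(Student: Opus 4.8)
The working criterion is the one extracted just before Theorem \ref{theorem1}: a unit-speed curve $\gamma$ is a normal magnetic curve for $F_q$ if and only if $\nabla_T T=-q\varphi T$, i.e. $\kappa_1 v_2=-q\varphi T$. Besides this I would use repeatedly the $C$-manifold identities $(\nabla_X\varphi)Y=0$, $\nabla_X\xi_\alpha=0$, together with the framed $\varphi$-structure relations — in particular that $\varphi$ is $g$-skew-symmetric (a consequence of \eqref{eq2}) and that $g(\varphi X,\xi_\alpha)=\eta^\alpha(\varphi X)=0$. The forward implication and part (iv) come essentially for free from Theorem \ref{theorem1}: if the $\varphi$-helix $\gamma$ is a normal magnetic curve for some $F_q$ with $q\neq 0$, it must be one of the three types classified there, and rewriting the curvature relations of that classification — $|q|=\kappa_1$ for a Legendre circle, and $\kappa_1^2+\kappa_2^2=q^2$ together with $\sum_\alpha\cos^2\theta_\alpha=\kappa_2^2/(\kappa_1^2+\kappa_2^2)$ for a non-Legendre slant helix — produces exactly the hypotheses and the values of $q$ in (i)--(iii); hence outside those hypotheses $\gamma$ cannot be magnetic for any $F_q$ with $q\neq 0$, which is (iv). It remains to prove the converse in (i)--(iii).

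Case (i) is immediate: if the $\cos\theta_\alpha$ are constant with $\sum_\alpha\cos^2\theta_\alpha=1$, then by \eqref{eq2} $\|\varphi T\|^2=1-\sum_\alpha\cos^2\theta_\alpha=0$, so $\varphi T=0$; applying $\varphi$ and using \eqref{2.1} gives $T=\sum_\alpha\cos\theta_\alpha\xi_\alpha$, whence $\nabla_T T=\sum_\alpha\cos\theta_\alpha\nabla_T\xi_\alpha=0=-q\varphi T$ for every $q$. Case (ii): a Legendre $\varphi$-helix is, by Proposition \ref{prop}, a Legendre $\varphi$-circle; and then $\varphi T=\pm v_2$, because $\|\varphi T\|=1$ while $\varphi T$ lies in the $\varphi$-invariant space $sp\{T,v_2,\xi_1,\dots,\xi_s\}$ and (by skew-symmetry and $\eta^\alpha(\varphi T)=0$) is $g$-orthogonal to $T$ and to all $\xi_\alpha$, which by Proposition 1 forces it to be a unit multiple of $v_2$. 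Then $\nabla_T T=\kappa_1 v_2=-q\varphi T$ with $q=\mp\kappa_1$, so $\gamma$ is the magnetic circle of $F_{\pm\kappa_1}$.

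The substantive case is the converse of (iii). Its hypothesis involves $\kappa_2$ and so presupposes $r=3$; with $\kappa_1,\kappa_2>0$ it gives $0<\sum_\alpha\cos^2\theta_\alpha<1$, so $\gamma$ is a non-Legendre $\varphi$-helix of order $3$. By $\varphi$-invariance of $sp\{T,v_2,v_3\}$ and $g(\varphi T,T)=0$ one writes $\varphi T=bv_2+cv_3$; differentiating via $\nabla_T(\varphi T)=\varphi(\nabla_T T)=\kappa_1\varphi v_2$, the Frenet equations \eqref{3.1}, and $\varphi v_2\perp v_2$, one obtains $b'=\kappa_2 c$, $\varphi v_2=-bT+pv_3$ with $p:=(c'+\kappa_2 b)/\kappa_1$, and then $\varphi v_3=-cT-pv_2$. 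Substituting back, $\varphi^2 T=-(b^2+c^2)T-cp\,v_2+bp\,v_3$; comparing with $\varphi^2 T=-T+\sum_\alpha\eta^\alpha(T)\xi_\alpha$ from \eqref{2.1}, first along $T$ and then in squared norm, gives $b^2+c^2=1-\sum_\alpha\cos^2\theta_\alpha$ and $p^2=\sum_\alpha\cos^2\theta_\alpha$ (the latter because $\varphi T\neq 0$). Since $b^2+c^2$ is now a constant, $0=(b^2+c^2)'=2c(\kappa_2 b+c')=2\kappa_1 cp$, and since $p^2=\sum_\alpha\cos^2\theta_\alpha$ is a strictly positive constant, $c\equiv 0$. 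Hence $b$ is constant, $\varphi T=bv_2$ with $b^2=1-\sum_\alpha\cos^2\theta_\alpha=\kappa_1^2/(\kappa_1^2+\kappa_2^2)$, and $\nabla_T T=\kappa_1 v_2=(\kappa_1/b)\varphi T=-q\varphi T$ with $q=-\kappa_1/b$, $q^2=\kappa_1^2+\kappa_2^2$, which is (iii).

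The step I expect to be the main obstacle is this frame computation, together with the careful bookkeeping around it: that a Legendre $\varphi$-helix genuinely collapses to a circle — so the oscillatory solutions $\varphi T=\cos\mu\,v_2\pm\sin\mu\,v_3$ with non-constant $\mu$ are ruled out, which is exactly what Proposition \ref{prop} supplies — that $\varphi T=0$ occurs only under hypothesis (i), and that the geodesic case ($r=1$, $\kappa_1=0$) and the order-$2$ case are consistent with the statement; one should also read (iv) with $q\neq 0$, since every geodesic is trivially a magnetic curve for the zero field $F_0$.
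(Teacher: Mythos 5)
Your proposal is correct and follows essentially the same route as the paper: the paper's own proof is just the one-sentence appeal to Theorem \ref{theorem1} and Proposition \ref{prop}, declaring it ``straightforward'' to verify $\nabla _{T}T=-q\varphi T$ for the valid $q$, and your argument is exactly that appeal with the omitted verification written out (in particular the $\varphi $-invariant frame computation $\varphi T=bv_{2}+cv_{3}$, $c\equiv 0$ in case (iii), and the reading of (iv) with $q\neq 0$).
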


\begin{proof}
In view of Theorem \ref{theorem1} and Proposition \ref{prop}, it is
straightforward to show that $\nabla _{T}T=-q\varphi T$ for valid $q$.
\end{proof}

\section{Magnetic Curves of $%
\mathbb{R}
^{2n+s}$ with its structures as a $C$-manifold}

In this section, we consider parametrizations of normal magnetic curves in $%
M=%
\mathbb{R}
^{2n+s}$ as a $C$-manifold. Let $\left\{
x_{1},...,x_{n},y_{1},...,y_{n},z_{1},...,z_{s}\right\} $ be the coordinate
functions and define 
\begin{equation*}
X_{i}=\frac{\partial }{\partial x_{i}},\text{ }Y_{i}=\frac{\partial }{%
\partial y_{i}},\text{ }\xi _{\alpha }=\frac{\partial }{\partial z_{\alpha }}%
\text{ },
\end{equation*}%
for $i=1,...,n$ and $\alpha =1,2,...,s$. $\left\{ X_{i},Y_{i},\xi _{\alpha
}\right\} $ is an orthonormal basis of $\chi \left( M\right) $ with respect
to the usual metric%
\begin{equation*}
g=\sum\limits_{i=1}^{n}\left[ \left( dx_{i}\right) ^{2}+\left( dy_{i}\right)
^{2}\right] +\sum\limits_{\alpha =1}^{s}\left( dz_{\alpha }\right) ^{2}.
\end{equation*}%
Let us define a $(1,1)$-type tensor field $\varphi $ as 
\begin{equation*}
\varphi X_{i}=-Y_{i},\text{ }\varphi Y_{i}=X_{i},\text{ }\varphi \xi
_{\alpha }=0.
\end{equation*}%
Finally, let $\eta ^{\alpha }=dz_{\alpha }$ for $\alpha =1,2,...,s$. It is
well-known that $\left( M,\varphi ,\xi _{\alpha },\eta ^{\alpha },g\right) $
is a $C$-manifold, since $d\eta ^{\alpha }=0$ and $d\Omega =0$, where $%
\Omega \left( X,Y\right) =g\left( X,\varphi Y\right) $ for all $X,Y\in \chi
(M)$ (see \cite{Blair-1970} and \cite{Blair-2010}).

Let us denote normal magnetic curve by 
\begin{equation*}
\gamma =\left( \gamma _{1},...,\gamma _{n},\gamma _{n+1},...,\gamma
_{2n},\gamma _{2n+1},...,\gamma _{2n+s}\right) .
\end{equation*}%
Then%
\begin{equation*}
T=\gamma ^{\prime }=\left( \gamma _{1}^{\prime },...,\gamma _{n}^{\prime
},\gamma _{n+1}^{\prime },...,\gamma _{2n}^{\prime },\gamma _{2n+1}^{\prime
},...,\gamma _{2n+s}^{\prime }\right) ,
\end{equation*}%
which gives us%
\begin{equation*}
\nabla _{T}T=\left( \gamma _{1}^{\prime \prime },...,\gamma _{n}^{\prime
\prime },\gamma _{n+1}^{\prime \prime },...,\gamma _{2n}^{\prime \prime
},\gamma _{2n+1}^{\prime \prime },...,\gamma _{2n+s}^{\prime \prime }\right)
,
\end{equation*}%
\begin{equation*}
\varphi T=\left( \gamma _{n+1}^{\prime },...,\gamma _{2n}^{\prime },-\gamma
_{1}^{\prime },...,-\gamma _{n}^{\prime },0,...,0\right) .
\end{equation*}%
Since%
\begin{equation*}
\nabla _{T}T=-q\varphi T,
\end{equation*}%
we have%
\begin{equation*}
\eta ^{\alpha }\left( T\right) =\gamma _{2n+\alpha }^{\prime }=\cos \theta
_{\alpha }=constant
\end{equation*}%
and%
\begin{equation*}
\gamma _{2n+\alpha }=\cos \theta _{\alpha }t+h_{\alpha }.
\end{equation*}%
We also get%
\begin{eqnarray*}
\gamma _{i}^{\prime \prime } &=&-q\gamma _{n+i}^{\prime }, \\
\gamma _{n+i}^{\prime \prime } &=&q\gamma _{i}^{\prime }
\end{eqnarray*}%
for $i=1,...,n$. As a result, we obtain 
\begin{equation*}
\gamma _{i}^{\prime }\gamma _{i}^{\prime \prime }+\gamma _{n+i}^{\prime
}\gamma _{n+i}^{\prime \prime }=0,
\end{equation*}%
i.e.%
\begin{equation*}
\left( \gamma _{i}^{\prime }\right) ^{2}+\left( \gamma _{n+i}^{\prime
}\right) ^{2}=c_{i}^{2}.
\end{equation*}%
If we consider differentiable functions $f_{i}:I\rightarrow 
\mathbb{R}
,$ we can write%
\begin{equation*}
\gamma _{i}^{\prime }=c_{i}\cos f_{i},
\end{equation*}%
\begin{equation*}
\gamma _{n+i}^{\prime }=c_{i}\sin f_{i}.
\end{equation*}%
After calculations, we find%
\begin{equation*}
f_{i}\left( t\right) =qt+d_{i}.
\end{equation*}%
Finally, we have%
\begin{equation*}
\gamma _{i}=\frac{c_{i}}{q}\sin \left( qt+d_{i}\right) +b_{i},
\end{equation*}%
\begin{equation*}
\gamma _{n+i}=\frac{-c_{i}}{q}\cos \left( qt+d_{i}\right) +b_{n+i}.
\end{equation*}%
Thus, we give the following theorem:

\begin{theorem}
The normal magnetic curves on $%
\mathbb{R}
^{2n+s}$ satisfying the Lorentz equation $\nabla _{T}T=-q\varphi T$ have the
parametric equations%
\begin{equation*}
\gamma _{i}=\frac{c_{i}}{q}\sin \left( qt+d_{i}\right) +b_{i},
\end{equation*}%
\begin{equation*}
\gamma _{n+i}=\frac{-c_{i}}{q}\cos \left( qt+d_{i}\right) +b_{n+i},
\end{equation*}%
\begin{equation*}
\gamma _{2n+\alpha }=\cos \theta _{\alpha }t+h_{\alpha },
\end{equation*}%
where $i=1,...,n,$ $\alpha =1,2,...,s,$ $b_{i},$ $h_{\alpha }$ are arbitrary
constants and $\theta _{\alpha }$ are the constant contact angles.
\end{theorem}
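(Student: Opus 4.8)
The plan is to integrate the Lorentz equation $\nabla_T T = -q\varphi T$ componentwise, using the explicit description of the $C$-manifold structure on $\mathbb{R}^{2n+s}$ given just above. Since the metric is the standard one and the $\xi_\alpha = \partial/\partial z_\alpha$ are coordinate vector fields, the Levi-Civita connection is flat, so $\nabla_T T$ is simply the vector of second derivatives $(\gamma_1'',\dots,\gamma_{2n+s}'')$, and $\varphi T = (\gamma_{n+1}',\dots,\gamma_{2n}',-\gamma_1',\dots,-\gamma_n',0,\dots,0)$ from the definition $\varphi X_i = -Y_i$, $\varphi Y_i = X_i$, $\varphi\xi_\alpha = 0$. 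Equating components gives three groups of ODEs: the $z$-equations $\gamma_{2n+\alpha}'' = 0$, and the coupled pairs $\gamma_i'' = -q\gamma_{n+i}'$, $\gamma_{n+i}'' = q\gamma_i'$ for $i = 1,\dots,n$.

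First I would dispatch the $z$-components: $\gamma_{2n+\alpha}'' = 0$ integrates to $\gamma_{2n+\alpha}' = \text{const}$, and since $\gamma$ is unit-speed this constant is $\cos\theta_\alpha = \eta^\alpha(T)$ (which is constant by Theorem \ref{theorem1}, or directly because its derivative vanishes), yielding $\gamma_{2n+\alpha}(t) = \cos\theta_\alpha\, t + h_\alpha$. Next, for each fixed $i$, I would observe that multiplying the first pair equation by $\gamma_i'$, the second by $\gamma_{n+i}'$, and adding gives $\gamma_i'\gamma_i'' + \gamma_{n+i}'\gamma_{n+i}'' = 0$, hence $(\gamma_i')^2 + (\gamma_{n+i}')^2 = c_i^2$ is constant. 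This lets me set $\gamma_i' = c_i\cos f_i(t)$, $\gamma_{n+i}' = c_i\sin f_i(t)$ for some differentiable function $f_i$; substituting back into either ODE of the pair (e.g. $\gamma_i'' = -c_i f_i' \sin f_i = -q c_i \sin f_i$) forces $f_i' = q$, so $f_i(t) = qt + d_i$. One more integration of $\gamma_i' = c_i\cos(qt+d_i)$ and $\gamma_{n+i}' = c_i\sin(qt+d_i)$ produces the claimed formulas $\gamma_i = \frac{c_i}{q}\sin(qt+d_i) + b_i$ and $\gamma_{n+i} = -\frac{c_i}{q}\cos(qt+d_i) + b_{n+i}$, which is legitimate since $q \neq 0$.

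The only genuine subtlety — the step I would flag as the main obstacle — is the passage from $(\gamma_i')^2 + (\gamma_{n+i}')^2 = c_i^2$ to the trigonometric substitution when some $c_i = 0$: there the pair of velocity components vanishes identically and $f_i$ is undefined, but this case is harmless since it just gives $\gamma_i, \gamma_{n+i}$ constant, consistent with the stated formulas (with $c_i = 0$). Care is also needed to confirm that the constants are genuinely free: the unit-speed normalization imposes $\sum_{i=1}^n c_i^2 + \sum_{\alpha=1}^s \cos^2\theta_\alpha = 1$, so the $c_i$ and $\theta_\alpha$ are not all independent, but the translation constants $b_i, b_{n+i}, h_\alpha$ and phases $d_i$ are arbitrary, as claimed. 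Finally I would note the converse is immediate: differentiating the given parametrization twice and comparing with $-q\varphi T$ shows every such curve solves the Lorentz equation, so the parametrization is exhaustive.
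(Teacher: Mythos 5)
Your proposal is correct and follows essentially the same route as the paper: componentwise integration of $\nabla_T T=-q\varphi T$ in the flat metric, the first integral $(\gamma_i')^2+(\gamma_{n+i}')^2=c_i^2$, the substitution $\gamma_i'=c_i\cos f_i$, $\gamma_{n+i}'=c_i\sin f_i$ leading to $f_i=qt+d_i$, and direct integration of the $z$-components. Your extra remarks on the degenerate case $c_i=0$ and the unit-speed constraint $\sum_i c_i^2+\sum_\alpha\cos^2\theta_\alpha=1$ are sound refinements that the paper leaves implicit.
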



\begin{thebibliography}{9}
\bibitem{Adachi-1996} Adachi ,T.: \textit{Curvature bound and trajectories
for magnetic fields on a Hadamard surface}. Tsukuba J. Math. \textbf{20},
225--230, (1996).

\bibitem{BRCF} Barros M., Romero, A.,~Cabrerizo, J.~L.,~Fern\'{a}ndez, M.:~ 
\textit{The Gauss-Landau-Hall problem on Riemannian surfaces}. J. Math.
Phys. \textbf{46}, no. 11, 112905, 15 pp, (2005).

\bibitem{Blair-1970} Blair, D. E.: Geometry of manifolds with structural
group $U(n)\times O(s)$. J. Differential Geometry, \textbf{4}, 155-167,
(1970).

\bibitem{Blair-2010} Blair, D. E.: \textit{Riemannian Geometry of Contact
and Symplectic Manifolds}, 2nd ed., Progr. Math. 203, Birkhiiuser Boston,
Boston, MA, 2010.

\bibitem{Comtet-1987} Comtet, A.:~ \textit{On the Landau levels on the
hyperbolic plane}. Ann. Physics \textbf{173}, 185-209, (1987).

\bibitem{DIMN-2015} Dru\c{t}\u{a}-Romaniuc, S. L., Inoguchi, J., Munteanu,
M. I., Nistor, A. I.: \textit{Magnetic curves in Sasakian manifolds}.
Journal of Nonlinear Mathematical Physics, \textbf{22}, 428-447, (2015).

\bibitem{GO-2019} G\"{u}ven\c{c}, \c{S}., \"{O}zg\"{u}r, C.: \textit{On
slant magnetic curves in }$S$\textit{-manifolds}. J. Nonlinear Math. Phys. 
\textbf{26} (4), 536--554, (2019).

\bibitem{magneticcosymplectic} Dru\c{t}\u{a}-Romaniuc, S.-L., Inoguchi,
J.-I., Munteanu, M. I., Nistor, A. I.: \textit{Magnetic Curves in
Cosymplectic Manifolds}. Reports on Mathematical Physics.\ \textbf{78} (1),
33-48, (2016).
\end{thebibliography}
\end{document}